\theoremstyle{definition}
\newtheorem{theorem}[equation]{Theorem}
\newtheorem{lemma}[equation]{Lemma}
\newtheorem{corollary}[equation]{Corollary}
\newtheorem{proposition}[equation]{Proposition}
\newtheorem{remark}[equation]{Remark}
\renewcommand{\phi}{\varphi}
\newcommand{\D}{\mathrm{d}}
\newcommand{\E}{\mathrm{e}}
\renewcommand{\(}{\bigl(}
\renewcommand{\)}{\bigr)\vphantom{)}}
\newcommand{\impl}{\>\Longrightarrow\>}
\newcommand{\One}{{1\hskip-2.5pt{\rm l}}}
\newcommand{\eps}{\varepsilon}
\newcommand{\cO}{\mathcal O}
\newcommand{\la}{\lambda}
\newcommand{\Ex}{\mathbb E\,}
\newcommand{\R}{\mathbb R}
\renewcommand{\Pr}[1]{\mathbb{P}\mskip1.5mu\(\mskip1.5mu#1\mskip1.5mu\)}
\begin{document}

\title{Expected difference of order statistics\\ in terms of hazard rate}

\author{Boris Tsirelson}

\date{}
\maketitle

\begin{abstract}
If the hazard rate $ \frac{ F'(x) }{ 1-F(x) } $ is increasing (in $x$), then $
\Ex ( X_{n:n} - X_{n-1:n} ) $ is decreasing (in $n$), and moreover, completely
monotone.
\end{abstract}

Motivated by relevance of the expected difference $ \Ex ( X_{n:n} - X_{n-1:n} )
$ of the highest two order statistics to the theory of auctions \cite{RE},
Michael Landsberger asked me \cite{La} about a useful sufficient condition for
this expected difference to be decreasing in the sample size $n$. Here I answer
his question.

Throughout, $ X_1,\dots,X_n $ are independent, identically distributed random
variables, $ \Pr{ X_k \le x } = F(x) $ for all $x$ and $k$; we assume existence
of $x$ such that $ 0<F(x)<1 $ (otherwise the distribution degenerates into a
single atom); $ X_{1:n} \le X_{2:n} \le \dots \le X_{n:n} $ are the
corresponding order statistics; $ \Ex |X_k| < \infty $ (integrability); and
\[
R_n = \Ex ( X_{n:n} - X_{n-1:n} ) \quad \text{for } n \ge 2 \, .\footnotemark
\]
\footnotetext{%
 In fact, $ 0 \le R_n < \infty $ (integrability of $X_k$ implies integrability
 of order statistics, since $ |X_{k:n}| \le \max(|X_1|,\dots,|X_n|) \le
 |X_1|+\dots+|X_n| $).}

\begin{theorem}\label{th}
If the function\footnote{%
 This is a function $ (-\infty,\infty) \to [0,\infty] $.}
$ \phi(x) = -\log \(1-F(x)\) $ is convex, then

(a) the sequence $ (R_n)_n $ is decreasing, that is, $ R_{n+1} \le R_n $ for all
$ n \ge 2 $;

(b) the sequence $ (R_n)_n $ is logarithmically convex, that is, $ R_n \le
\sqrt{ R_{n-1} R_{n+1} } $ for all $ n \ge 3 $;

(c) moreover, the sequence $ (R_n)_n $ is completely monotone.\footnote{%
 About completely monotone sequences see \cite{CM} and \cite[Ch.~III,
 Sect.~4]{Wi}.}
\end{theorem}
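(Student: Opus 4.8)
\medskip
\noindent
My plan is to reduce the whole statement to the Hausdorff characterization of completely monotone sequences. First I would record the elementary formula for $R_n$: since $X_{n:n}-X_{n-1:n}$ is the Lebesgue length of $[X_{n-1:n},X_{n:n})$, Tonelli gives $R_n=\int_{\R}\Pr{X_{n-1:n}\le x<X_{n:n}}\,\D x$; the event $\{X_{n-1:n}\le x<X_{n:n}\}$ says that exactly one of $X_1,\dots,X_n$ exceeds $x$, so its probability is $n(1-F(x))F(x)^{n-1}$, and, writing $1-F=\E^{-\phi}$,
\[
 R_n = n\int_{\R}\bigl(1-F(x)\bigr)F(x)^{n-1}\,\D x = n\int_{\R} g_n\bigl(\phi(x)\bigr)\,\D x ,\qquad g_n(v):=\E^{-v}\bigl(1-\E^{-v}\bigr)^{n-1},
\]
with $g_n\ge0$ and $g_n(0)=g_n(+\infty)=0$ for $n\ge2$.

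Next comes the step where convexity is used. A convex $\phi\colon\R\to[0,\infty]$ is continuous wherever finite, so $F=1-\E^{-\phi}$ can carry an atom only at the right endpoint of its support, $\phi$ vanishes on an initial half-line and is strictly increasing afterwards, and its generalized inverse $\psi(v):=\sup\{x:\phi(x)\le v\}$ is continuous, nondecreasing and --- crucially --- concave on $(0,\infty)$, so that $\psi'\ge0$ is nonincreasing. The set $\{\phi=0\}$ contributes nothing (there $g_n(\phi)=g_n(0)=0$), and on its complement the change of variable $x=\psi(v)$, $\D x=\psi'(v)\,\D v$, followed by $t=1-\E^{-v}$, would turn the last display into
\[
 R_n = n\int_0^\infty g_n(v)\,\psi'(v)\,\D v = n\int_0^1 t^{\,n-1}k(t)\,\D t ,\qquad k(t):=\psi'\bigl(-\log(1-t)\bigr),
\]
where $k\ge0$ is nonincreasing on $(0,1)$, being the nonincreasing $\psi'$ composed with the increasing map $t\mapsto-\log(1-t)$.

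Finally I would write $k(t)=k(1^-)+\nu\bigl((t,1)\bigr)$ for the nonnegative measure $\nu:=-\D k$ on $(0,1)$, substitute this, and exchange the order of integration (Fubini; all terms nonnegative) in $n\int_0^1 t^{n-1}\int_{(t,1)}\D\nu(s)\,\D t$, obtaining
\[
 R_n = \int_{[0,1]} s^{\,n}\,\D\mu(s) ,\qquad \mu:=\nu+k(1^-)\,\delta_1\ge0 ,\qquad n\ge2
\]
(the measure $\mu$ need not be finite, but $\int s^2\,\D\mu=R_2<\infty$, so $R_{n+2}=\int_{[0,1]}s^{\,n}\,s^2\,\D\mu(s)$ does have a finite representing measure, which is all that is needed). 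By the Hausdorff characterization of completely monotone sequences \cite{CM}, \cite[Ch.~III, Sect.~4]{Wi}, $(R_n)_{n\ge2}$ is completely monotone, which is~(c). Part~(a) is the first-order instance of~(c), $R_{n+1}-R_n\le0$; part~(b) follows from~(c), or directly from the representation by the Cauchy--Schwarz inequality, $R_n=\int s^{(n-1)/2}s^{(n+1)/2}\,\D\mu\le\sqrt{R_{n-1}R_{n+1}}$ for $n\ge3$.

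The crux --- and the only genuinely delicate point --- is the middle step: recognizing that convexity of $\phi$ is exactly the condition that makes the density $k$ monotone, so that the whole problem collapses onto the Hausdorff moment problem. The surrounding technicalities (legitimacy of the change of variable for a possibly irregular $F$, and of the Fubini step when $\mu$ is infinite near $0$) are routine, and are in any case controlled by the observation above that convexity of $\phi$ already forbids bad behaviour of $F$ away from the right endpoint of its support.
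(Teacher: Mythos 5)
Your argument is correct, and at bottom it exhibits the same structure as the paper's proof: convexity of $\phi$ makes the inverse hazard rate $1/\phi'$ decreasing, and this is exactly what turns $R_n$ into the $n$-th moment of a positive measure on $[0,1]$. Indeed your $k$ is literally the paper's object $\mu(x)=1/\phi'_+(x)$ read in the variable $t=F(x)$ (the paper's $\nu_1$), so the representing measures coincide; your check against the exponential and truncated-exponential cases of Corollaries \ref{cor3} and \ref{cor4} would come out identically. The differences are in execution, and they are worth recording. The paper integrates by parts first, in the $x$-variable (Lemma \ref{lemma8}), which produces a boundary term $\mu(a)F^n(a)$ at $a\to L+$ whose vanishing needs a separate estimate \eqref{eq10} via integrability of $X_k$; only then does it substitute $p=F(x-)$, $t=-\log p$ and invoke Widder's theorems on completely monotone \emph{functions}. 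You substitute first, via the concave generalized inverse $\psi$, and replace integration by parts with the layer-cake identity $k(t)=k(1^-)+\nu((t,1))$ plus Tonelli; since every term is nonnegative, the identity $R_n=\int_{[0,1]}s^n\,\D\mu(s)$ holds in $[0,\infty]$ with no boundary term to control, and the possible blow-up of $1/\phi'$ near $L$ is absorbed by the finiteness of $R_2$. That is a genuine streamlining. The one step you wave at rather than prove --- legitimacy of $\D x=\psi'(v)\,\D v$ --- is indeed routine: $\psi$, being concave and finite on the open interval $(0,\phi(M-))$, is locally Lipschitz, hence absolutely continuous on compact subintervals, and the change of variables for nonnegative integrands extends to the whole interval by monotone convergence; likewise your preliminary observations (no atom of $F$ before $M$, strict increase of $\phi$ on $\{0<\phi<\infty\}$) all follow from convexity as you claim. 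What your route does not deliver is the statement that $R(u)=\int \E^{-ut}\,\D\nu_2(t)$ is a completely monotone, hence analytic, function of a real variable $u\ge2$; the theorem as stated does not need this, but the paper uses it in the proofs of Corollaries \ref{cor3}(b) and \ref{cor4}(b), so the Laplace-transform formulation is not a detour there.
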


The hazard rate (called also failure rate) being the derivative $ \la(x) =
\phi'(x) = \frac{ F'(x) }{ 1-F(x) } $, its increase means convexity of $ \phi
$.\footnote{%
 Convexity of $\phi$ is defined by $ \phi \( \theta a + (1-\theta)b \) \le
 \theta \phi(a) + (1-\theta) \phi(b) $ for all $ \theta \in [0,1] $ and $ a,b
 \in \R $; differentiability is not assumed (nor implied; but local absolute
 continuity is implied). If $\phi$ is continuously differentiable (or at least,
 absolutely continuous), then its convexity is equivalent to increase of the
 hazard rate on $ \{ x : F(x)<1 \} $.}

The inequality (b) implies $ R_n \le \frac12 ( R_{n-1} + R_{n+1} )
$. Equivalently,
\begin{equation}
\frac{R_n}{R_{n+1}} \le \frac{R_{n-1}}{R_n} \quad \text{and} \quad R_n-R_{n+1}
\le R_{n-1}-R_n \, .
\end{equation}

Convexity of $\phi$ is still assumed in two corollaries of (the proof of)
Theorem \ref{th}.

\begin{corollary}\label{cor3}
(a) If the given distribution is (shifted) exponential, that is, of the
form\footnote{%
 This is the distribution of the random variable $ \frac1\la Y + L $ where $Y$
 is standard exponential.}
\[
F(x) = \begin{cases}
 0 &\text{for } x \in (-\infty,L],\\
 1 - \E^{-\la(x-L)} &\text{for } x \in [L,\infty)
\end{cases}
\]
where $ 0<\la<\infty $ and $ -\infty < L < \infty $, then $ R_n = \frac1\la $
for all $ n \ge 2 $.

(b) Otherwise $ R_{n+1} < R_n $ for all $ n \ge 2 $, and $ R_n-R_{n+1} <
R_{n-1}-R_n $ for all $ n \ge 3 $.
\end{corollary}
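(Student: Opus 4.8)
The plan is to obtain both parts directly from the integral representation of $R_n$ used in the proof of Theorem~\ref{th}. That proof starts from $R_n = n\int_{-\infty}^{\infty} F(x)^{n-1}\bigl(1-F(x)\bigr)\,\D x$ (the probability that exactly one of the $X_i$ exceeds $x$ is $nF(x)^{n-1}(1-F(x))$), substitutes $x=\psi(t)$ where $\psi$ is the concave nondecreasing generalized inverse of $\phi$ (so $1-F(\psi(t))=\E^{-t}$), and, after an integration by parts that uses the concavity of $\psi$, arrives at
\[
R_n = \psi'(\infty) + \int_{(0,\infty)}(1-\E^{-t})^{n}\,\D\rho(t) = \int_{[0,1]} s^{n}\,\D\beta(s)\qquad(n\ge2),
\]
where $\rho:=-\D\psi'\ge0$ encodes the concavity of $\psi$, $\psi'(\infty):=\lim_{t\to\infty}\psi'(t)\in[0,\infty)$, and $\beta$ is the positive measure on $[0,1]$ obtained by adding the mass $\psi'(\infty)$ at the point $1$ to the push-forward of $\rho$ under $t\mapsto1-\E^{-t}$. (This exhibits $(R_n)_{n\ge2}$ as a Hausdorff moment sequence, which is Theorem~\ref{th}(c); I will take this representation for granted.)

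Given it, the corollary is essentially bookkeeping. From the identities $s^{n}-s^{n+1}=s^{n}(1-s)$ and $s^{n-1}-2s^{n}+s^{n+1}=s^{n-1}(1-s)^{2}$, valid for $s\in[0,1]$, integration against $\D\beta$ gives
\[
R_n-R_{n+1}=\int_{[0,1]}s^{n}(1-s)\,\D\beta(s),\qquad R_{n-1}-2R_n+R_{n+1}=\int_{[0,1]}s^{n-1}(1-s)^{2}\,\D\beta(s).
\]
Each integrand is strictly positive on the open interval $(0,1)$ and vanishes at $s=0$ and $s=1$, so for each fixed $n$ either of these two nonnegative quantities equals $0$ if and only if $\beta$ puts no mass on $(0,1)$. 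Since a possible atom of $\beta$ at $0$ contributes nothing to $R_n$ for $n\ge2$, in that case $\beta=\psi'(\infty)\delta_1$ and $R_n=\psi'(\infty)$ for every $n\ge2$.

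It then remains to identify the case $\beta\bigl((0,1)\bigr)=0$. By the construction of $\beta$ this means $\rho=0$ on $(0,\infty)$, i.e.\ $\psi$ is affine there, $\psi(t)=L+t/\la$; the slope $1/\la$ cannot be $0$ (else $F$ degenerates to a single atom, which is excluded), and an affine function with positive slope is unbounded, so $\psi$ is a homeomorphism of $[0,\infty)$ onto $[L,\infty)$, its inverse $\phi$ is finite everywhere with $\phi(x)=\la(x-L)$ for $x\ge L$ and $\phi(x)=0$ for $x\le L$ --- precisely the (non-truncated) shifted exponential of part~(a), for which $R_n=\psi'(\infty)=1/\la$; this last value is also immediate from the R\'enyi representation, by which $X_{n:n}-X_{n-1:n}$ has the law of a single rate-$\la$ exponential variable. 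Conversely, if $F$ is \emph{not} this shifted exponential then $\beta\bigl((0,1)\bigr)>0$, both displayed integrals are strictly positive, and we obtain $R_{n+1}<R_n$ for all $n\ge2$ and $R_{n-1}-2R_n+R_{n+1}>0$, i.e.\ $R_n-R_{n+1}<R_{n-1}-R_n$, for all $n\ge3$, which is part~(b). The one genuinely delicate step is this last identification: passing from "$\rho=0$'', equivalently "the hazard rate is constant on the support of $F$'', to "$F$ is a non-truncated shifted exponential'', while correctly handling the endpoints $t=0$ and $t=\infty$ of the $\psi$-integral and ruling out truncated-exponential behaviour (which convexity of $\phi$ together with non-degeneracy does rule out, since a truncated exponential has $\phi$ jumping to $+\infty$ instead of staying affine, and a flat spot or a jump of $\phi$ would force $\psi$ to be non-affine). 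Everything else is the two-line algebra above plus the positivity of $s^{n}(1-s)$ and $s^{n-1}(1-s)^{2}$ on $(0,1)$.
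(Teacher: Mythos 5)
Your argument is correct, and its core---representing $(R_n)_{n\ge2}$ as a Hausdorff moment sequence $R_n=\int_{[0,1]}s^n\,\D\beta(s)$ and then identifying the degenerate case with linearity of $\phi$---coincides with the paper's: your measure $\beta$, built from $-\D\psi'$ with $\psi=\phi^{-1}$, is the push-forward under $F$ of the paper's $\D(-\mu)$, i.e.\ its $\D(-\nu_1)$. (One bookkeeping point: when $F$ has an atom at the top of its support, $\phi(M-)<\infty$, the generalized inverse gives $\psi'(\infty)=0$ and the boundary mass $\mu(M-)$ lands at $s=F(M-)<1$ rather than at $s=1$; this only helps you, since that mass then lies in $(0,1)$ and correctly forces strict decrease, consistently with Corollary~\ref{cor4}(a).) Where you genuinely diverge is the mechanism for strictness in part~(b). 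The paper argues that $R(u)=\int\E^{-ut}\,\D\nu_2(t)$ is analytic on $(2,\infty)$ and nonconstant, hence nonconstant on every subinterval, hence strictly decreasing; you instead write $R_n-R_{n+1}=\int s^n(1-s)\,\D\beta$ and $R_{n-1}-2R_n+R_{n+1}=\int s^{n-1}(1-s)^2\,\D\beta$ and note that each vanishes only if $\beta((0,1))=0$. Your route is more elementary (no appeal to analyticity of Laplace transforms) and has the concrete advantage of explicitly delivering the second-difference inequality $R_n-R_{n+1}<R_{n-1}-R_n$, which the paper's printed proof of Corollary~\ref{cor3} asserts but spells out only for the first differences; the analyticity argument, on the other hand, is what the paper reuses to get the strict log-convexity of Corollary~\ref{cor4}(b), where your positivity trick would need the equality case of Cauchy--Schwarz instead. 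Your handling of the identification $\beta((0,1))=0\Leftrightarrow{}$shifted exponential (ruling out truncation, flat spots of $F$, and $L=-\infty$) matches the paper's and is sound.
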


\begin{corollary}\label{cor4}
(a) If the given distribution is of the form\footnote{%
 This is the distribution of the random variable $ \min \( M, \frac1\la Y + L \)
 $ where $Y$ is standard exponential. Note that the distribution of corollary
 \ref{cor3}(a) is the special case $ M = \infty $.}
\[
F(x) = \begin{cases}
 0 &\text{for } x \in (-\infty,L],\\
 1 - \E^{-\la(x-L)} &\text{for } x \in [L,M),\\
 1 &\text{for } x \in [M,\infty)
\end{cases}
\]
where $ 0<\la<\infty $ and $ -\infty < L < M \le \infty $, then $ R_n =
\frac1\la \( 1 - \E^{-\la(M-L)} \)^n $ for all $ n \ge 2 $.

(b) Otherwise $ \frac{R_n}{R_{n+1}} < \frac{R_{n-1}}{R_n} $ for all $ n \ge 3 $
(and holds the conclusion of Item (b) of Corollary \ref{cor3}).
\end{corollary}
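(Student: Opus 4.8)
The plan is to derive (a) by evaluating $R_n$ explicitly, and (b) by combining the representation of $(R_n)_n$ as a moment sequence (the device by which complete monotonicity is obtained in Theorem~\ref{th}) with the equality case in part~(b) of that theorem. For (a): starting from $X_{n:n}-X_{n-1:n}=\int_{\R}\One[\,X_{n-1:n}\le x<X_{n:n}\,]\,\D x$, the observation that this event means ``exactly one of $X_1,\dots,X_n$ exceeds $x$'', and Fubini, one has $R_n=n\int_{\R}\bigl(1-F(x)\bigr)F(x)^{\,n-1}\,\D x$ (an identity that occurs anyway in the proof of Theorem~\ref{th}). For the distribution of~(a) the integrand vanishes off $(L,M)$ and equals $\E^{-\la(x-L)}\bigl(1-\E^{-\la(x-L)}\bigr)^{n-1}$ on $(L,M)$, so the substitution $v=1-\E^{-\la(x-L)}$ turns the integral into $\frac n\la\int_0^{p}v^{\,n-1}\,\D v=\frac{p^{\,n}}{\la}$ with $p=1-\E^{-\la(M-L)}$; taking $M=\infty$ recovers Corollary~\ref{cor3}(a).

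For (b), recall from the proof of Theorem~\ref{th} that $R_n=\int_{[0,1]}t^{\,n}\,\D\mu(t)$ for a nonnegative measure $\mu$ with $\int_{[0,1]}t^{2}\,\D\mu<\infty$; more concretely $R_n=n\int_0^1 t^{\,n-1}g(t)\,\D t$ with $g\ge0$ nonincreasing on $(0,1)$, the measure $\mu$ being obtained from $g$ by integration by parts, and $g$ tied to $F$ by the relation that $1/g(t)$ is the hazard rate at the point $x$ with $F(x)=t$ (with $g=0$ where no such $x$ exists). Inequality~(b) of the theorem is the Cauchy--Schwarz inequality for $t^{(n-1)/2}$ and $t^{(n+1)/2}$ in $L^2(\mu)$, and for $n\ge3$ equality forces these two functions to be proportional $\mu$-almost everywhere, hence $\mu$ to be carried by a two-point set $\{0,c\}$. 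A point mass of $\mu$ at $0$ influences no $R_m$, so equality for even a single $n\ge3$ yields $R_m=b\,c^{\,m}$ for all $m\ge2$, with $b>0$ and $0<c\le1$. It therefore suffices to prove that $R_m=b\,c^{\,m}$ for all $m$ forces $F$ to be of the form in~(a).

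To this end, put $g_0:=b\,\One_{(0,c)}$, which also satisfies $m\int_0^1 t^{\,m-1}g_0(t)\,\D t=b\,c^{\,m}$; then $\int_0^1 t^{\,m-1}\bigl(g(t)-g_0(t)\bigr)\,\D t=0$ for every $m\ge2$, so the function $h(t)=t\bigl(g(t)-g_0(t)\bigr)$ — which lies in $L^1(0,1)$ because $R_2=2\int_0^1 t\,g(t)\,\D t<\infty$ — has all its power moments $\int_0^1 t^{\,j}h(t)\,\D t$, $j\ge0$, equal to $0$; by the density of polynomials this gives $h=0$ a.e., hence $g=g_0$ a.e. Read back through the correspondence above, ``$g$ equals the positive constant $b$ on an initial interval $(0,c)$ and vanishes beyond it'' means that the hazard rate equals a constant $\la:=1/b$ on $\{\,0<F<1\,\}$ and that this set equals $\{\,x:0<F(x)<c\,\}$; equivalently, $F(x)=1-\E^{-\la(x-L)}$ on its support $[L,M)$ with $1-\E^{-\la(M-L)}=c$, which is exactly the distribution of~(a). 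Consequently, when $F$ is not of that form one has $R_n^2<R_{n-1}R_{n+1}$, that is $R_n/R_{n+1}<R_{n-1}/R_n$, for all $n\ge3$; and the remaining assertions of~(b) follow from Corollary~\ref{cor3}(b), whose hypothesis is weaker than that of the present item.

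The one genuinely non-routine step is this last translation — recovering the \emph{shape} of $F$ from the fact that $g$ is a one-step function. There one has to exclude degeneracies (for instance the hazard rate becoming infinite on a sub-interval of the support, which cannot occur because $\phi=-\log(1-F)$ is finite on $\{F<1\}$) and to keep careful track of the atoms and flat stretches of $F$; this analysis, however, is precisely what is carried out in the proof of Theorem~\ref{th}, so here it need only be invoked.
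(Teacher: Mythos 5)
Your proof is correct, but for Item (b) it follows a genuinely different route from the paper's. For (a) you integrate \eqref{eq4} directly via the substitution $v=F(x)$, whereas the paper reads the answer off Lemma \ref{lemma8} (there $\D(-\mu)$ is a single atom at $M$ of mass $1/\la$); both are one-line computations. For (b) the paper works with the continuous Laplace transform $R(u)=\int \E^{-ut}\,\D\nu_2(t)$: it shows that $R$ is not exponential (by uniqueness of the representing measure --- essentially the same translation back to $F$ that you perform), and then uses analyticity of $\log R$ --- a nonnegative analytic second derivative not identically zero cannot vanish on an interval --- to upgrade ``not exponential'' to ``strictly log-convex on all of $[2,\infty)$''. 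You instead stay with the discrete moment representation $R_m=\int t^m\,\D\mu(t)$, observe that equality $R_n^2=R_{n-1}R_{n+1}$ at even a single $n\ge3$ forces, via the equality case of Cauchy--Schwarz, the measure to be a single atom and hence the whole sequence to be geometric, and then rule that out by Hausdorff-moment uniqueness (all power moments of $t\,(g-g_0)$ vanish). Your argument is more elementary --- no appeal to analyticity of Laplace transforms --- and it localizes the conclusion: equality at one index already pins down the distribution. Two small points to tidy up. First, your closing claim that the recovery of the shape of $F$ from ``$g$ is a one-step function'' is ``precisely what is carried out in the proof of Theorem \ref{th}'' is inaccurate: that translation appears in the paper only inside the proofs of Corollaries \ref{cor3}(b) and \ref{cor4}(b), so you should carry it out yourself; your own sketch suffices once you add that $\mu$ is monotone and equals $b$ on a dense subset of $(L,M)$, hence everywhere there, hence $\phi$ is affine with slope $1/b$, and $L>-\infty$ because $\phi(L+)=0$. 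Second, you should record that nondegeneracy gives $R_m>0$ for every $m$; this both makes the ratios in (b) meaningful and excludes the degenerate branch of the Cauchy--Schwarz equality case in which $t^{(n-1)/2}$ vanishes $\mu$-almost everywhere.
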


It is widely known that $ \Ex Y = \int_0^\infty \(1-F_Y(x)\) \, \D x =
\int_0^\infty \Pr{Y>x} \, \D x $ whenever $Y$ is a random variable such that $
\Pr{Y\ge0} = 1 $. Here is a slightly more general fact.

\begin{lemma}\label{lemma2}
$ \Ex (Y-Z) = \int_{-\infty}^\infty \Pr{ Z \le x < Y } \, \D x $ whenever $ Y,Z $ are
random variables such that $ \Pr{ Y \ge Z } = 1 $.
\end{lemma}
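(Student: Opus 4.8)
The plan is to prove the identity pointwise (at almost every sample point) and then integrate in the sample variable, using Tonelli's theorem to exchange the order of integration.

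First I would fix a sample point $\om$ with $Y(\om) \ge Z(\om)$; since $\Pr{Y\ge Z}=1$, almost every $\om$ is of this kind. For such an $\om$ the number $Y(\om)-Z(\om)$ is precisely the Lebesgue measure of the interval $[Z(\om),Y(\om))$, that is,
\[
Y(\om)-Z(\om) = \int_{-\infty}^\infty \One_{\{Z(\om)\le x<Y(\om)\}} \, \D x \, .
\]
Taking the expectation of both sides is legitimate because $Y-Z\ge0$ almost surely, so $\Ex(Y-Z)$ is a well-defined element of $[0,\infty]$; this gives $\Ex(Y-Z) = \Ex \int_{-\infty}^\infty \One_{\{Z\le x<Y\}} \, \D x$.

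Next I would interchange $\Ex$ and $\int\D x$. The integrand $(\om,x)\mapsto \One_{\{Z(\om)\le x<Y(\om)\}}$ is nonnegative and jointly measurable --- it is the indicator of the set $\{(\om,x):x-Z(\om)\ge0\}\cap\{(\om,x):x-Y(\om)<0\}$, measurable because $(\om,x)\mapsto x-Z(\om)$ and $(\om,x)\mapsto x-Y(\om)$ are measurable --- so Tonelli's theorem applies with no integrability hypothesis, both sides again lying in $[0,\infty]$. Hence
\[
\Ex(Y-Z) = \int_{-\infty}^\infty \Ex\, \One_{\{Z\le x<Y\}} \, \D x = \int_{-\infty}^\infty \Pr{Z\le x<Y} \, \D x \, ,
\]
which is the assertion.

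I do not expect a serious obstacle here: the only points deserving a word of care are the joint measurability of the integrand (needed to invoke Tonelli) and the remark that, since $Y-Z\ge0$ a.s., both sides of the claimed equality are automatically well-defined in $[0,\infty]$, so the lemma requires no finiteness assumption and remains correct even when $\Ex Y=\infty$. If one wished to sidestep Tonelli, an alternative is to note $\Pr{Z\le x<Y} = \Pr{Y>x} - \Pr{Z>x}$ --- valid because $\{Z>x\}\subseteq\{Y>x\}$ up to a null set --- and reduce to the quoted identity $\Ex W = \int_0^\infty \Pr{W>x}\,\D x$ for $W\ge0$ applied to suitable nonnegative pieces; but the route via Tonelli is shorter and cleaner.
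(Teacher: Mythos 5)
Your proof is correct and follows essentially the same route as the paper: represent $Y-Z$ as $\int\One_{[Z,Y)}(x)\,\D x$ and swap expectation with the integral via Fubini--Tonelli. The only difference is that you spell out the joint measurability and the nonnegativity that license the interchange, which the paper leaves implicit.
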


\begin{proof}
Using the indicator function $ \One_{[a,b)} $ of an interval $ [a,b) $ (equal to
$1$ on $[a,b)$ and $0$ outside), we have $ \int_{-\infty}^\infty \One_{[a,b)}
(x) \, \D x = b-a $; and for the random interval $ [Z,Y) $ we have $ \Ex
\One_{[Z,Y)} (x) = \Pr{ x \in [Z,Y) } = \Pr{ Z \le x < Y } $. Thus,
\[
\Ex (Y-Z) = \Ex \! \int_{-\infty}^\infty \!\! \One_{[Z,Y)} (x) \, \D x =
\int_{-\infty}^\infty \!\! \Ex \One_{[Z,Y)} (x) \, \D x = \int_{-\infty}^\infty
\!\! \Pr{ Z \le x < Y } \, \D x \, .
\]
\end{proof}

\begin{lemma}\label{lemma3}
\[
\Ex (X_{k+1:n}-X_{k:n}) = \binom n k \int_{-\infty}^\infty F^k(x) (1-F(x))^{n-k}
\, \D x
\]
for all $ n=2,3,\dots $ and $ k = 1,\dots,n-1 $; in particular (for $ k = n-1 $),
\begin{equation}\label{eq4}
R_n = n \int_{-\infty}^\infty F^{n-1}(x) (1-F(x)) \, \D x \, .
\end{equation}
\end{lemma}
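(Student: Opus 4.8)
The plan is to apply Lemma~\ref{lemma2} with $Y = X_{k+1:n}$ and $Z = X_{k:n}$. By the very definition of order statistics, $X_{k:n} \le X_{k+1:n}$ holds surely, so the hypothesis $\Pr{Y \ge Z} = 1$ is satisfied, and the lemma gives
\[
\Ex(X_{k+1:n} - X_{k:n}) = \int_{-\infty}^\infty \Pr{ X_{k:n} \le x < X_{k+1:n} } \, \D x \, .
\]
It then remains only to evaluate, for each fixed real $x$, the probability inside the integral.

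The key step is to rewrite the event $\{ X_{k:n} \le x < X_{k+1:n} \}$ in terms of the count $N(x) = \#\{\, i : X_i \le x \,\}$. Sorting the sample, $X_{j:n} \le x$ holds precisely when at least $j$ of the variables lie in $(-\infty,x]$, i.e.\ when $N(x) \ge j$. Hence $X_{k:n} \le x$ amounts to $N(x) \ge k$, while $X_{k+1:n} > x$ amounts to $N(x) \le k$; intersecting, the event is exactly $\{ N(x) = k \}$. Using the closed comparison ``$\le x$'' uniformly here is what keeps this identity clean even when $F$ has atoms, so ties need no separate discussion.

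Finally, since $X_1,\dots,X_n$ are i.i.d.\ with $\Pr{ X_i \le x } = F(x)$, the count $N(x)$ is binomially distributed with parameters $n$ and $F(x)$, so $\Pr{ N(x) = k } = \binom n k F^k(x)(1-F(x))^{n-k}$. Substituting this back under the integral yields the asserted formula, and the special case $k = n-1$ is exactly \eqref{eq4}. I do not expect a genuine obstacle: the only point demanding a little care is the translation between the order statistics and the binomial count $N(x)$, together with noting that the integrand is non-negative and measurable in $x$, so that Lemma~\ref{lemma2} — which already packages the Tonelli-type interchange of expectation and integral — applies verbatim.
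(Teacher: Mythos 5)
Your proof is correct and follows essentially the same route as the paper: both apply Lemma~\ref{lemma2} to $Y=X_{k+1:n}$, $Z=X_{k:n}$ and identify the event $\{X_{k:n}\le x<X_{k+1:n}\}$ with $\{N(x)=k\}$ for the binomial count $N(x)=\#\{i:X_i\le x\}$ (the paper's $K_x$). Your extra remark that $X_{j:n}\le x$ iff $N(x)\ge j$ just makes explicit the equivalence the paper states directly.
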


\begin{proof}\let\qed\relax
Given $x$, the random set of all $ i \in \{1,\dots,n\} $ such that $ X_i \le x $
has $ K_x $ elements, where $ K_x $ is a random variable distributed binomially,
$ \mathrm B (n,F(x)) $. Thus, $ \Pr{ K_x = k } = \binom n k F^k(x)
(1-F(x))^{n-k} $. On the other hand, $ K_x = k $ if and only if $ X_{k:n} \le x
< X_{k+1:n} $. Using Lemma \ref{lemma2},
\begin{multline*}
\Ex (X_{k+1:n}-X_{k:n}) = \! \int_{-\infty}^\infty \! \Pr{ X_{k:n} \le x < X_{k+1:n} }
\, \D x = \! \int_{-\infty}^\infty \! \Pr{ K_x = k } \, \D x = \\
= \int_{-\infty}^\infty \binom n k F^k(x) (1-F(x))^{n-k} \, \D x \,
. \qquad\rlap{$\qedsymbol$}
\end{multline*}
\end{proof}

Proposition \ref{prop5} (below), being a weakened version of Theorem \ref{th},
may suffice a reader not interested in mathematical intricacies. Here we assume
that $ F(0) = 0 $, that is, $ \Pr{ X_n>0 } = 1 $, introduce $ M \in (0,\infty]
$ such that $ F(x)<1 $ if and only if $ x<M $,\footnote{%
 Note two cases allowed, $ M<\infty $ and $ M=\infty $.}
and extend the sequence $ R_2,R_3,\dots $ with one more term
\[
R_1 = \Ex X_1 = \int_0^\infty \( 1 - F(x) \) \, \D x \, ,
\]
which conforms to \eqref{eq4} for $ n=1 $. The case $ M<\infty $ has two
subcases: either $ 0 < F(M-) < 1 $ (atom at $M$), or $ F(M-)=1 $, both covered
by Prop.~\ref{prop5}. In contrast, if $ M=\infty $, then $ F(M-)=1 $; this case
is also covered by Prop.~\ref{prop5}.

\begin{proposition}\label{prop5}
Assume that $ F(0)=0 $, $ F(x)>0 $ for all $ x>0 $, $ F $ is twice
continuously differentiable on $ (0,M) $, and the function $ \phi(x) = -\log
\(1-F(x)\) $ is convex on $(0,M)$. Then $ R_{n+1} \le R_n $ for all $ n \ge 1
$.
\end{proposition}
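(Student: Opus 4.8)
The plan is to turn $R_n$ into a one-dimensional integral of a power against the reciprocal hazard rate, and then extract an elementary inequality. By Lemma~\ref{lemma3}, and since $F=0$ on $(-\infty,0]$ while $1-F=0$ on $[M,\infty)$, we have $R_n=n\int_0^M F^{n-1}(1-F)\,\D x$ for all $n\ge1$ (for $n=1$ this reads $R_1=\int_0^M(1-F)\,\D x=\Ex X_1$). I would first record that $F'>0$ throughout $(0,M)$: the hazard rate $\la=\phi'$ is non-negative and, by convexity of $\phi$, non-decreasing on $(0,M)$, so if $\la(x_0)=0$ for some $x_0\in(0,M)$ then $\la$ — hence $F'$ — vanishes on $(0,x_0]$, forcing $F$ to be constant and equal to $F(0)=0$ on $[0,x_0]$, contrary to $F>0$ on $(0,\infty)$. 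Thus $F$ is a continuously differentiable increasing bijection of $(0,M)$ onto $(0,c)$, where $c:=F(M-)\in(0,1]$, with (likewise $C^1$) inverse $Q$.

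Substituting $u=F(x)$, using $Q'(u)=1/F'(Q(u))$ and $\dfrac{1-u}{F'(Q(u))}=\dfrac{1-F(Q(u))}{F'(Q(u))}=\dfrac1{\la(Q(u))}$, I obtain
\[
R_n=n\int_0^c u^{n-1}h(u)\,\D u,\qquad h(u):=\frac1{\la(Q(u))}\quad(0<u<c),
\]
where $h$ is non-negative, $C^1$, non-increasing (as $\la$ is non-decreasing and $Q$ increasing), and $\int_0^c h=R_1<\infty$. Hence $R_n-R_{n+1}=\int_0^c\bigl(nu^{n-1}-(n+1)u^n\bigr)h(u)\,\D u=\int_0^c G_n'(u)\,h(u)\,\D u$ with $G_n(u):=u^n(1-u)$, where crucially $G_n(0)=0$ and $G_n\ge0$ on $[0,1]$. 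Integration by parts gives
\[
R_n-R_{n+1}=h(c-)\,G_n(c)-\int_0^c h'(u)\,G_n(u)\,\D u\ge 0,
\]
since $h(c-)\ge0$, $G_n(c)\ge0$ (because $c\le1$), and $-h'\ge0$ with $G_n\ge0$ on $(0,c)$; the boundary contribution at $0$ vanishes because $G_n(u)\le u$ there and $u\,h(u)\to0$ as $u\downarrow0$ (a standard consequence of $h$ being non-increasing and integrable near $0$). This is exactly $R_{n+1}\le R_n$.

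I expect the only delicate step to be the change of variable, which is why I would secure $F'>0$ on $(0,M)$ before anything else; otherwise $Q$ need not be differentiable and the ``reciprocal hazard rate'' has no meaning. Everything after that rests on the single elementary fact that the antiderivative $u^n(1-u)$ of $nu^{n-1}-(n+1)u^n$ is non-negative on $[0,1]$ and vanishes at the origin, married to monotonicity of $h$.
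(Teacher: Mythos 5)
Your proposal is correct and is essentially the paper's own argument in the coordinate $u=F(x)$: your $h(u)=1/\la(Q(u))$ is the inverse hazard rate $\mu$ of Lemma~\ref{lemma6}, your integration by parts reproduces the identity $R_n-R_{n+1}=\int (F^n-F^{n+1})\,\D(-\mu)+\mu(M-)\bigl(F^n(M-)-F^{n+1}(M-)\bigr)$, and positivity follows from the same two non-negative terms. The only substantive variation is how the boundary term at the origin is killed --- you use integrability of the non-increasing $h$ to get $u\,h(u)\to0$, whereas the paper uses convexity of $\phi$ to get $\mu(x)F^n(x)\le x$ --- and both are valid.
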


In the lemma below it is natural and convenient to use instead of $\la(x)$ the
inverse hazard rate\footnote{%
 Not to be confused with the reverse hazard rate $ \frac{F'(x)}{F(x)} $. The
 inverse hazard rate is closely related to the so-called mean time between
 failures (MTBF).}
$ \mu(x) = \frac1{\la(x)} = \frac1{\phi'(x)} = \frac{1-F(x)}{F'(x)} $ for $ x
\in (0,M) $. It is decreasing (since $ \la(x) $ is increasing), this is why
below we prefer the (positive) differential $ \D(-\mu(x)) = -\mu'(x) \, \D x $
of an increasing (and negative) function to the (negative) differential $
\D\mu(x) $ of a decreasing (and positive) function. Note that $ x>0 \impl
F(x)>0 \impl \phi(x)>0 \impl \phi'(x)>0 \impl \mu(x)<\infty $ due to convexity
of $\phi$.

\begin{lemma}\label{lemma6}
In the assumptions of Prop.~\ref{prop5} holds
\[
R_n = \int_0^M F^n(x) \, \D \( -\mu(x) \) + \mu(M-) F^n(M-) \qquad \text{for all
} n \ge 1 \, .
\]
\end{lemma}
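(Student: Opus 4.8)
The plan is to rewrite \eqref{eq4} as a Stieltjes integral with respect to $-\mu$ and integrate by parts. First I would note that, under the hypotheses of Prop.~\ref{prop5}, the integrand in \eqref{eq4} is supported in $(0,M)$: it vanishes for $x\le0$ (there $F(x)=0$, so $F^{n-1}(x)=0$ when $n\ge2$) and for $x\ge M$ (there $1-F(x)=0$), while for $n=1$ one simply has $R_1=\Ex X_1=\int_0^M(1-F(x))\,\D x$; hence $R_n=n\int_0^M F^{n-1}(x)(1-F(x))\,\D x$ for every $n\ge1$. On $(0,M)$ we have $F'>0$ (because $\phi'>0$ there, as recalled just before the lemma), so $\mu=(1-F)/F'$ is positive and $C^1$ there, with $1-F(x)=\mu(x)F'(x)$; substituting,
\[
R_n=\int_0^M \mu(x)\,n F^{n-1}(x)F'(x)\,\D x=\int_0^M \mu(x)\,\D\(F^n(x)\)\,,
\]
a (possibly improper at $0$ and $M$) integral of a nonnegative function whose value $R_n$ is finite.

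Next I would integrate by parts on an arbitrary compact $[a,b]\subset(0,M)$,
\[
\int_a^b \mu(x)\,\D\(F^n(x)\)=\mu(b)F^n(b)-\mu(a)F^n(a)+\int_a^b F^n(x)\,\D\(-\mu(x)\)\,,
\]
and let $a\to0+$ and $b\to M-$. The left side increases to $R_n$ by monotone convergence. Since $\mu$ is decreasing and positive, $\mu(M-):=\lim_{x\to M-}\mu(x)$ exists and is finite ($0\le\mu(M-)\le\mu(x_0)<\infty$ for any fixed $x_0\in(0,M)$), and $F^n(M-):=\lim_{x\to M-}F^n(x)\le1$, so $\mu(b)F^n(b)\to\mu(M-)F^n(M-)<\infty$ --- this is the stray term in the statement. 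It then remains only to show $\mu(a)F^n(a)\to0$; granting that, the middle integral converges to $R_n-\mu(M-)F^n(M-)$, which (being also the monotone limit of $\int_a^b F^n\,\D(-\mu)$, as the integrand is nonnegative) equals $\int_0^M F^n(x)\,\D(-\mu(x))$, and the lemma follows after rearranging.

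The vanishing of the lower boundary term is the crux. Since $F$ is a distribution function with $F(0)=0$, right-continuity gives $F(0+)=0$, hence $\phi(x)=-\log(1-F(x))\to0$ as $x\to0+$; so $\phi$, being convex on $(0,M)$ and $C^1$, satisfies $\phi(x)=\int_0^x\phi'(t)\,\D t\le x\,\phi'(x)$ by monotonicity of $\phi'$, i.e.\ $\mu(x)=1/\phi'(x)\le x/\phi(x)$. Together with the elementary inequality $\phi(x)=-\log(1-F(x))\ge F(x)$ this gives $\mu(x)F^n(x)\le xF^{n-1}(x)\le x\to0$ as $x\to0+$, for every $n\ge1$ (using $0\le F\le1$). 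I expect this boundary estimate near $0$ --- and the companion check that $\mu(M-)$ is finite --- to be the only points needing care; the rest is routine manipulation of the improper integrals.
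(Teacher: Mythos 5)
Your proposal is correct and follows essentially the same route as the paper: rewrite \eqref{eq4} as $\int_0^M \mu(x)\,\D F^n(x)$, integrate by parts, and kill the boundary term at $0$ via the same two inequalities $F(x)\le\phi(x)$ and $\mu(x)\le x/\phi(x)$ (the latter from convexity of $\phi$ with $\phi(0+)=0$). Your treatment of the improper integral by exhausting $(0,M)$ with compacts is just a more explicit version of the paper's integration by parts between $0+$ and $M-$.
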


\begin{proof}
By \eqref{eq4}, for all $ n \ge 1 $,
\[
R_n = n \! \int_0^M \!\!\!\! F^{n-1}(x) (1-F(x)) \, \D x = n \! \int_0^M \!\!\!\!
F^{n-1}(x) \mu(x) F'(x) \, \D x = \!\! \int_0^M \!\!\!\! \mu(x) \, \D F^n(x)
. \footnotemark
\]
\footnotetext{%
 The integral over $(0,M)$; the possible jump at $M$ is irrelevant.}
We integrate by parts, taking into account that $ \mu(M-)<\infty $:
\begin{multline*}
R_n = \mu(x) F^n(x) \big|_{0+}^{M-} \! - \int_0^M \!\!\! F^n(x)  \, \D \mu(x) =
 \\
= \mu(M-) F^n(M-) - \mu(x) F^n(x) \big|_{0+} \! - \int_0^M \!\!\! F^n(x) \, \D
\mu(x) \, .
\end{multline*}
It remains to prove that $ \mu(x) F^n(x) \big|_{0+} = 0 $. We note that $
\phi(x) \ge F(x) $ (since $ \log q \le q-1 $ for $ 0<q\le1 $), thus $
\mu(x) F^n(x) \le \mu(x) F(x) \le \frac{\phi(x)}{\phi'(x)} $. Convexity of $\phi$
implies $ \phi'(x) \ge \frac{ \phi(x)-\phi(0) }{ x } = \frac{ \phi(x) }{ x } $
for $ x>0 $. Thus, $ \mu(x) F^n(x) \le \phi(x) \cdot \frac{ x }{ \phi(x) } = x
$, therefore $ \mu(x) F^n(x) \big|_{0+} = 0 $.
\end{proof}

\begin{proof}[Proof of Prop.~\ref{prop5}]\let\qed\relax
\begin{multline*}
R_n - R_{n+1} = \int_0^M \underbrace{ \( F^n(x) - F^{n+1}(x) \) }_{\ge0} \,
\underbrace{ \D (-\mu(x) \Big) }_{\ge0} + \\
+ \underbrace{ \mu(M-) }_{\ge0} \cdot \underbrace{ (F^n(M-)-F^{n+1}(M-))
}_{\ge0} \ge 0 \, . \qquad\rlap{$\qedsymbol$}
\end{multline*}
\end{proof}

\begin{remark}\label{remark7}
The inequality $ R_n \le \sqrt{ R_{n-1} R_{n+1} } $, stated in Theorem \ref{th},
can be proved now, for all $ n \ge 2 $, in the assumptions of
Prop.~\ref{prop5}. To this end we use Lemma \ref{lemma6}, Cauchy--Schwarz inequality
for sums, and Cauchy--Schwarz inequality for integrals. By the latter, denoting $ I_n =
\int_0^M F^n(x) \, \D \( -\mu(x) \) $, we have
\begin{multline*}
I_n^2 = \Big( \int_0^M \sqrt{ F^{n-1}(x) F^{n+1}(x) } \, \D \( -\mu(x) \) \Big)^2 \le \\
\le \Big( \int_0^M F^{n-1}(x) \, \D \( -\mu(x) \) \Big)
 \Big( \int_0^M F^{n+1}(x) \, \D \( -\mu(x) \) \bigg) = I_{n-1} I_{n+1} \, .
\end{multline*}
And by the former, denoting $ c = \mu(M-) $, we have
\begin{multline*}
R_n^2 = \( I_n + c F^n(M-) \)^2 \le \( \sqrt{I_{n-1}} \sqrt{I_{n+1}} +
 c \sqrt{ F^{n-1}(M-) } \sqrt{ F^{n+1}(M-) } \)^2 \le \\
\le \( I_{n-1} + c F^{n-1}(M-) \) \( I_{n+1} + c F^{n+1}(M-) \) = R_{n-1}
R_{n+1} \, .
\end{multline*}
\end{remark}

Now we return to the general setup, waiving additional assumptions (that $ M>0
$, $ F(0)=0 $, $ F(x)>0 $ for all $x>0$, and differentiability) while requiring
convexity of the function $ \phi(x) = -\log \( 1-F(x) \) $ on
$(-\infty,\infty)$. Again, $ n \ge 2 $. We define $ L, M $ by
\begin{gather*}
L = \sup \{ x : F(x)=0 \} = \inf \{ x : F(x)>0 \} \, , \\
M = \sup \{ x : F(x)<1 \} = \inf \{ x : F(x)=1 \} \, ,
\end{gather*}
and note that $ -\infty \le L < M \le \infty $, $ F : (L,M] \to (0,1] $ is
increasing, $ F(L) = 0 $ (in the case $ L>-\infty $ convexity of $\phi$ on
$(-\infty,M)$ implies continuity at $L$ of $\phi$ and $F$), and $ F(M) = 1 $
(but $ F(M-) $ may be less if $ M < \infty $). In the case $M=\infty$ we treat $
(L,M] = (L,\infty] $ as an interval on the extended real line $ [-\infty,\infty]
$; in this case $ F(M-) = F(\infty) = 1 $. Accordingly, $ \phi : (L,M) \to
(0,\infty) $ is increasing, $ \phi(L+) = 0 $, and $ \phi(M-) = \infty $ if $ M =
\infty $; otherwise, if $ M < \infty $, $ \phi(M-) $ may be finite or infinite.

The convex function $\phi$ on $(L,M)$ need not be differentiable, but has left
and right derivatives $ \phi'_-, \phi'_+ : (L,M) \to (0,\infty) $.\footnote{%
 They do not vanish, since $ \phi'_-(x) \ge \frac{\phi(x)-\phi(a)}{x-a} $ for
 all $ a \in (L,x) $, and $ \phi(x)-\phi(a) > 0 $ for $a$ close to $L$.}
Both are increasing (and may be unbounded near $M$); $ \phi'_- $ is left
continuous, $ \phi'_+ $ is right continuous; $ \phi'_- \le \phi'_+ $; and the
set $ \{ x \in (L,M) : \phi'_-(x) < \phi'_+(x) \} $ is at most countable. We
define $ \mu : (L,M] \to [0,\infty) $ by
\begin{align*}
& \mu(x) = \frac1{ \phi'_+(x) } \qquad \text{for } L<x<M \, , \\
& \mu(M) = 0 \, ,
\end{align*}
and observe that $ \mu $ is decreasing and right continuous (and may be
unbounded near $L$). The corresponding (positive, locally finite) Stieltjes
measure $ \D(-\mu) $ on $(L,M]$ is defined by $ \int_{(a,b]} \D(-\mu) =
\mu(a)-\mu(b) $ whenever $ L<a<b\le M $. This measure has atoms at the points of
discontinuity of $ \mu $ (if any), that is, the points of $ \{ x \in (L,M) :
\phi'_-(x) < \phi'_+(x) \} $, and in addition, at $M$, if $ \mu(M-)>0 $ (which
may happen, be $M$ finite or infinite).

In order to prove Theorem 1 we first generalize Lemma \ref{lemma6}.

\begin{lemma}\label{lemma8}
\[
R_n = \int_{(L,M]} F^n(x-) \, \D \( -\mu(x) \) \qquad \text{for all } n \ge 2
\]
(Lebesgue-Stieltjes integral).
\end{lemma}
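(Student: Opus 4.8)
The plan is to turn the Lebesgue--Stieltjes integral into an ordinary integral by Tonelli's theorem, the key structural input being that convexity of $\phi$ makes $F$ absolutely continuous on the open interval $(L,M)$.

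First I would collect the regularity facts. A convex function is locally Lipschitz on the interior of its domain, so $\phi$ is locally absolutely continuous on $(L,M)$; composing with the $C^1$ maps $t\mapsto 1-\E^{-t}$ and $t\mapsto (1-\E^{-t})^n$, both $F=1-\E^{-\phi}$ and $F^n$ are locally absolutely continuous on $(L,M)$, with $F'=\phi'\E^{-\phi}$ and $(F^n)'=nF^{n-1}F'$ almost everywhere there. Moreover a convex function is differentiable off a countable (hence null) set, so $\phi'=\phi'_+$ a.e., whence $\mu F'=\E^{-\phi}=1-F$ a.e.\ on $(L,M)$. This last identity is the pointwise heart of the proof; it is the generalization, valid without differentiability of $F$, of the relation $1-F=\mu F'$ used in the proof of Lemma~\ref{lemma6}.

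Next I would write $F^n(y-)$ as an integral. Since $F(L+)=0$ (convexity of $\phi$ forces continuity of $\phi$, hence of $F$, at a finite $L$, and $F(-\infty)=0$ automatically), absolute continuity on $[L+\eps,y]$ followed by $\eps\downarrow 0$ (monotone convergence) gives $F^n(y)=n\int_{(L,y)}F^{n-1}(x)F'(x)\,\D x$ for $y\in(L,M)$, and then $y\uparrow M$ (monotone convergence again) gives the same with $F^n(M-)$ on the left. Thus, uniformly in $y\in(L,M]$, $F^n(y-)=n\int_{(L,M)}\One_{(L,y)}(x)\,F^{n-1}(x)F'(x)\,\D x$. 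Substituting this into $\int_{(L,M]}F^n(y-)\,\D(-\mu(y))$ and interchanging the order of integration by Tonelli's theorem (the integrand is nonnegative and both measures are $\sigma$-finite), the inner integral over $y$ becomes $\int_{(x,M]}\D(-\mu)=\mu(x)-\mu(M)=\mu(x)$ by the definition of the Stieltjes measure and of $\mu(M)=0$. What remains is $n\int_{(L,M)}F^{n-1}(x)F'(x)\mu(x)\,\D x$, which by the a.e.\ identity $\mu F'=1-F$ equals $n\int_{(L,M)}F^{n-1}(x)(1-F(x))\,\D x$, and this is exactly $R_n$ by \eqref{eq4}, since the integrand there vanishes outside $(L,M)$ (here $n\ge 2$ is used: $F^{n-1}=0$ wherever $F=0$).

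The step needing the most care is the representation $F^n(y-)=n\int_{(L,y)}F^{n-1}F'\,\D x$ at the two ends of the interval: near $L$ it relies on $F(L+)=0$, and near $M$ one must permit $\phi$ (hence $F'$) to be unbounded, which is harmless because $F\le 1$ is bounded and the passage $y\uparrow M$ is a monotone limit that also supplies, via the Tonelli step, the atom of $\D(-\mu)$ at $M$. An alternative route is integration by parts on $(a,M]$ followed by $a\downarrow L$; that route forces one to prove separately that the boundary term $\mu(a)F^n(a)$ tends to $0$ as $a\downarrow L$ --- true for $n\ge 2$, using the standing integrability assumption (which bounds $\int F^{n-1}$ near $L$) together with a Lipschitz estimate for $F^{-(n-1)}$ --- whereas the Tonelli route avoids this; the identity then holds in $[0,\infty]$, and finiteness of both sides is, as noted in the setup, a consequence of integrability of $X_1$.
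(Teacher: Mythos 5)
Your proof is correct, but it takes a genuinely different route from the paper's. The paper proves the lemma by Lebesgue--Stieltjes integration by parts on $(a,M)$ (its equation \eqref{eq9}) and then sends $a \to L+$; the price of that route is the boundary term $\mu(a)F^n(a)$, whose vanishing as $a \to L+$ (equation \eqref{eq10}) requires a separate, somewhat delicate argument combining the convexity bound $F(x) \ge \frac12\(\phi(a)-(a-x)\phi'(a-)\)$ with the integrability of $X_k$ via $\int_{-\infty}^a F \to 0$. You instead represent $F^n(y-) = n\int_{(L,y)}F^{n-1}F'\,\D x$ (legitimate: $F^n$ is locally absolutely continuous on $(L,M)$ and $F(L+)=0$) and apply Tonelli, so that the inner integral $\int_{(x,M]}\D(-\mu) = \mu(x)$ reproduces exactly the factor $\mu(x)$ needed for the a.e.\ identity $\mu F' = 1-F$; since everything is nonnegative, no boundary term and no integrability hypothesis enter the computation, and the identity holds a priori in $[0,\infty]$ with finiteness supplied afterwards by $R_n<\infty$. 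Both routes share the same two structural inputs --- local absolute continuity of $F$ on $(L,M)$ from local Lipschitzness of the convex $\phi$, and $\phi'=\phi'_+$ a.e.\ so that $\mu F' = 1-F$ a.e.\ --- and both recover the atom of $\D(-\mu)$ at $M$ correctly (in your case through $\One_{(L,y)}(x)=\One_{(x,M]}(y)$ at $y=M$). What your approach buys is the elimination of the limit $a\to L+$ and of estimate \eqref{eq10} altogether; what the paper's buys is that the boundary analysis near $L$ is made explicit, which is reused nowhere else, so your version is arguably the cleaner proof of this particular lemma.
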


\begin{proof}
We use integration by parts for Lebesgue-Stieltjes integrals \cite{LSI,He}:
\begin{equation}\label{eq9}
\int_{(a,M)} \!\!\!\! \mu(x) \, \D F^n(x) = 
\mu(M-) F^n(M-) - \mu(a+) F^n(a+) - \int_{(a,M)} \!\!\!\!\! F^n(x) \, \D \mu(x)
\end{equation}
for every $ a \in (L,M) $, since on $(a,M)$ both functions, $\mu$ and $F^n$, are
bounded, monotone, $\mu$ is right continuous, and $F^n$ is continuous. (On the
left-hand side, values of the integrand $\mu(x)$ at points of discontinuity do
not matter, since the integrator $F^n(x)$ is continuous.)

In order to take the limit $ a \to L+ $ we note that, by convexity of $ \phi $,
$ \phi(x) \ge \phi(a) - (a-x) \phi'(a-) $ for all $ x \in (-\infty,a) $, and $
F(x) = 1 - \E^{-\phi(x)} \ge \frac{\phi(x)}{\phi(a)} ( 1 - \E^{-\phi(a)} ) $ by
convexity of $ \exp $.
For $ a \to L+ $ we have $ \phi(a) \to 0 $, thus $
\frac{1-\E^{-\phi(a)}}{\phi(a)} \to 1 $. We take $ a_0 \in (L,M) $ such that $ 
\frac{1-\E^{-\phi(x)}}{\phi(x)} \ge \frac12 $ for all $ x \in (L,a_0) $; then $
F(x) \ge \frac12 \phi(x) $ for all $ x \in (-\infty,a_0) $. Thus, $ F(x) \ge
\frac12 \( \phi(a) - (a-x) \phi'(a-) \) $ whenever $ -\infty < x < a < a_0 $, $
a > L $. Therefore
\[
\int\limits_{-\infty}^a \! F(x) \, \D x \ge \frac12 \!\!
\int\limits_{a-\frac{\phi(a)}{\phi'(a-)}}^a \!\!\! \(
\phi(a) - (a-x) \phi'(a-) \) \, \D x = \frac14 \phi(a) \cdot
\frac{\phi(a)}{\phi'(a-)} \ge \frac14 \mu(a) \phi^2(a)
\]
for all $ a \in (L,a_0) $. Taking into account that $ 0 \le \int_{-\infty}^a
F(x) \, \D x = \Ex \max(0,a-X_k) \to 0 $ as $ a \to L+ $ by integrability of $
X_k $, and $ F(x) \le \phi(x) $ everywhere, we get
\begin{equation}\label{eq10}
\mu(a) F^n(a) \to 0 \quad\text{as } a \to L+
\end{equation}
for each $ n \ge 2 $.

For every $ b \in (L,M) $, on $(L,b)$ the bounded convex function $\phi$, being
Lipschitz, is absolutely continuous, which implies absolute continuity of
$F$ and $F^n$ on $ (L,b) $, therefore, on $(L,M)$ (since the variation on
$[b,M)$ converges to $0$ as $ b \to M- $). This way a Lebesgue-Stieltjes
integral against $\D F^n(x)$ turns into a Lebesgue integral with $ \(F^n(x)\)'
\, \D x $:
\begin{multline*}
 \int_{(a,M)} \mu(x) \, \D F^n(x) =  \int_a^M \frac{ 1-F(x) }{ F_+'(x) }
 n F^{n-1}(x) F'(x) \, \D x = \\
= n \int_a^M F^{n-1}(x)(1-F(x)) \, \D x \to n \int_L^M F^{n-1}(x)(1-F(x)) \, \D
 x
\end{multline*}
as $ a \to L+ $. (Values of $F'$ at points of discontinuity do not matter.)
Using \eqref{eq9} and \eqref{eq10},
\[
\int_{(a,M)} F^n(x) \, \D \mu(x) \> \to \> \mu(M-) F^n(M-) - n \int_L^M
F^{n-1}(x)(1-F(x)) \, \D x
\]
as $ a \to L+ $.
Taking into account that $ \D \( -\mu(\cdot) \) $ is a
well-defined positive, locally finite measure on $ (L,M) $ (even if $
\mu(L+)=\infty $), we conclude that the integral $ \int_{(L,M)} F^n(x) \, \D
\(-\mu(x)\) $ is well-defined, and
\[
\int_{(L,M)} F^n(x) \, \D \(-\mu(x)\) = n \int_L^M F^{n-1}(x)(1-F(x))
\, \D x - \mu(M-) F^n(M-) \, .
\]
Finally, taking into account that $ \mu(M)=0 $ we get
\begin{multline*}
\int_{(L,M]} F^n(x-) \, \D \( -\mu(x) \) = \\
= \int_{(L,M)} F^n(x-) \, \D \( -\mu(x) \) + F^n(M-) \( -\mu(M) + \mu(M-) \) = \\
= \int_{(L,M)} F^n(x) \, \D \( -\mu(x) \) + \mu(M-) F^n(M-) = n \int_L^M
 F^{n-1}(x)(1-F(x)) \, \D x = \\
= n \int_{-\infty}^\infty F^{n-1}(x)(1-F(x)) \, \D x = R_n
\end{multline*}
by \eqref{eq4}.
\end{proof}

\begin{proof}[Proof of Theorem \ref{th}]
For each $ n \ge 2 $, by Lemma \ref{lemma8}, $ R_n = \int_{(L,M]} F^n(x-)
\, \D \( -\mu(x) \) $. The change of variable, $ p = F(x-) $, gives
\[
R_n = \int_{(0,1]} p^n \, \D \( -\nu_1(p) \) \, ,
\]
where $ \nu_1 : (0,1] \to [0,\infty) $ is the decreasing right continuous
function defined by
\begin{align*}
& \nu_1 \( F(x) \) = \mu(x) && \text{for all } x \in (L,M) \, , \\
& \nu_1(p) = 0 && \text{for all } p \in [F(M-),1] \, .
\end{align*}
A second change of variable, $ t = -\log p $ (that is, $ p = \E^{-t} $), gives
\[
R_n = \int_{[0,\infty)} \E^{-nt} \, \D \nu_2(t) \, ,
\]
where $ \nu_2 : [0,\infty) \to [0,\infty) $ is the increasing left continuous
function defined by $ \nu_2(-\log p) = \nu_1(p) $ for all $ p \in (0,1] $
(that is, $ \nu_2(t) = \nu_1 (\E^{-t}) $ for all $ t \in [0,\infty) $), and the
corresponding (positive, locally finite) Stieltjes measure $ \D\nu_2 $ on
$[0,\infty)$ is defined by $ \int_{[a,b)} \D\nu_2 = \nu_2(b)-\nu_2(a) $ whenever
$ 0\le a<b<\infty $.
We have $ R_n = R(n) $ where
\[
R(u) = \int_{[0,\infty)} \E^{-ut} \, \D \nu_2(t) \, ,
\]
$ R : [2,\infty) \to [0,\infty) $. This function $R$ is the Laplace transform of
a (positive, locally finite) measure, and is finite on $ [2,\infty) $. Thus, the
function $R$ is completely monotone on $[2,\infty)$ \cite[Ch.~IV, Def.~2(a,b,c)
and Th.~12a]{Wi}, and therefore the sequence $ \( R(2),R(3),\dots \) =
(R_2,R_3,\dots) $ is completely monotone \cite[Ch.~III, Def.~4 and Ch.~IV,
Th.~11d]{Wi}, which proves Item (c) of Theorem \ref{th}. Item (a) follows
immediately. Item (b) follows due to the fact that every completely monotone
function is logarithmically convex \cite[Ch.~IV, Th.~16 and Corollary
16]{Wi}.\footnote{%
 Alternatively, use Cauchy--Schwarz inequality (for integrals), as in Remark
 \ref{remark7}.}
\end{proof}

\begin{proof}[Proof of Corollary \ref{cor3}]
(a) $ \phi(x) = -\log \( 1 - (1-\E^{-\la(x-L)}) \) = \la(x-L) $ for all $ x \in
[L,\infty) $, thus $ \phi'(x) = \la $, $ \mu(x) = \frac1\la $, and Lemma
\ref{lemma8} gives $ R_n = \mu(\infty) = \frac1\la $.

(b) Assume toward contradiction that $R$ is constant on $[2,\infty)$, then: $
\D\nu_2 $ is a single atom at $0$; $ \nu_1 $ is constant on $(0,1)$; $ \mu $ is
constant on $(L,M)=(L,\infty)$; $ \phi'_+ $ is constant on $(L,\infty)$; $\phi$
is linear on $(L,\infty)$, which cannot happen when the given distribution is
not shifted exponential. Therefore $R$ is not constant on $[2,\infty)$.

Being analytic on $(2,\infty)$ \cite[Ch.~IV, Sect.~3]{Wi}, $R$ is not constant on $(a,b)$
whenever $ 2 \le a < b < \infty $. Thus $R$ is strictly decreasing on
$[2,\infty)$, whence $ R_{n+1}<R_n $ for all $n\ge2$.
\end{proof}

\begin{proof}[Proof of Corollary \ref{cor4}]
(a) $ \phi(x) = \la(x-L) $ for $ x \in (L,M) $, thus $ \mu(x) = \frac1\la $ for
$ x \in (L,M) $, and $ \mu(M)=0 $; $ \D\mu $ is a single atom at $M$; Lemma
\ref{lemma8} gives $ R_n = \int_{(L,M]} F^n(x-) \, \D\(-\mu(x)\) = F^n(M-)
\mu(M-) = ( 1 - \E^{-\la(M-L)} )^n \cdot \frac1\la $.

(b) First, $F$ is not constant (recall the proof of Corollary \ref{cor3}, and
consider $ M = \infty $). Assume toward contradiction that $R$ is exponential
on $(2,\infty)$, that is, $ R(u) = c \E^{-au} $ for all $ u \in (2,\infty) $
where $ a,c>0 $; then: $ \D\nu_2 $ is a single atom at $ t_0 = a \in (0,\infty)
$; $ \D\nu_1 $ is a single atom at $ p_0 = \E^{-t_0} \in (0,1) $, and $
\nu_1(1)=0 $.

On one hand, $ \nu_1(\cdot) = c>0 $ on $(0,p_0) $ and $ \nu_1(\cdot) = 0 $ on $
[p_0,1] $.

On the other hand, by the definition of $ \nu_1 $, taking into account that $
\mu(\cdot) = \frac1{\phi'_+(\cdot)} > 0 $ on $(L,M)$, we have $ \nu_1(\cdot) >0
$ on $\(0,F(M-)\) $ and $ \nu_1(\cdot) = 0 $ on $ [F(M-),1] $.

Thus, $ F(M-) = p_0 $, and $ \mu(\cdot) = c $ on $(L,M)$, whence $ \phi(x) =
\frac1c (x-L) $ for $x\in(L,M)$, which cannot happen when the given distribution
is not in the form of Item (a). Therefore $R$ is not exponential.

The function $R$ is logarithmically convex (recall the proof of Theorem
\ref{th}) and analytic (recall the proof of Corollary \ref{cor3}). Accordingly,
the function $ \log R(\cdot) $ is convex, analytic, and not linear (since $R$ is
not exponential). Moreover, is not linear on $(a,b)$ whenever $ 2 \le a < b <
\infty $ (since its second derivative cannot vanish on $(a,b)$). Thus $\log
R(\cdot)$ is strictly convex on $[2,\infty)$, whence $ \log R_n - \log R_{n+1} $
is strictly decreasing in $n$.
\end{proof}

\begin{remark}
If the hazard rate is not increasing, then the function $R$ is not completely
monotone on $[2,\infty)$, but it still may happen that the sequence $(R_n)_n$ is
decreasing.
\end{remark}

\begin{remark}
Under some conditions,
\[
R_n \approx \frac1{\la(x_n)} \, ,
\]
where $ x_n $ is the $\frac{n-1}n$-quantile, that is, $ 1-F(x_n) = \frac1n $.

Namely, if $ \frac1{\la(\cdot)} $ is bounded and
\[
\bigg| \frac1{\la(x)} - \frac1{\la(x_n)} \bigg| \le \eps \quad \text{for all } x
\text{ such that } \frac1n \eps \le 1-F(x) \le \frac1n \log \frac1\eps \, ,
\]
then $ \big| R_n - \frac1{\la(x_n)} \big| = \cO(\eps) $.

Thus, if the inverse hazard rate oscillates like, say, $ \frac1{\la(x)} = 2 +
\cos \( \eps \log (1-F(x) \) $ where $\eps>0$ is small, then the sequence $
(R_n)_n $ oscillates accordingly, $ R_n \approx 2 + \cos ( \eps \log n ) $.
\end{remark}

\bigskip
\filbreak
{
\small
\begin{sc}
\parindent=0pt\baselineskip=12pt
\parbox{4in}{
Boris Tsirelson\\
School of Mathematics\\
Tel Aviv University\\
Tel Aviv 69978, Israel
\smallskip
\par\quad\href{mailto:tsirel@post.tau.ac.il}{\tt
 mailto:tsirel@post.tau.ac.il}
\par\quad\href{http://www.tau.ac.il/~tsirel/}{\tt
 http://www.tau.ac.il/\textasciitilde tsirel/}
}

\end{sc}
}
\filbreak


\begin{thebibliography}{8.}

{\raggedright
\bibitem{RE} \href{http://en.wikipedia.org/wiki/Revenue equivalence}{\emph{Revenue
equivalence}}.
Wikipedia.

\bibitem{La} M. Landsberger (2019). Private communication.

\bibitem{CM} \href{http://en.wikipedia.org/wiki/Hausdorff moment
problem}{\emph{Hausdorff moment problem}}.
Wikipedia.

\bibitem{Wi}
D.V. Widder (1941):
\href{https://archive.org/details/in.ernet.dli.2015.201038}{\emph{The Laplace transform}}.
Princeton Mathematical Series, v. 6.
Princeton University Press, second printing, 1946.

\bibitem{LSI}
\href{http://en.wikipedia.org/wiki/Lebesgue%E2%80%93Stieltjes_integration#Integration_by_parts}
{\emph{Lebesgue--Stieltjes integration}}.
Wikipedia.

\bibitem{He} E. Hewitt (1960):
\href{https://www.jstor.org/stable/2309287}{\emph{Integration by parts for Stieltjes integrals.}}
Amer. Math. Monthly {\textbf 67}:5, 419–423.  
}
\end{thebibliography}
\end{document}